\theoremstyle{plain}
\newtheorem{theorem}{Theorem}[section]
\newtheorem{corollary}[theorem]{Corollary}
\theoremstyle{definition} 
\newtheorem{definition}[theorem]{Definition}
\newtheorem{example}[theorem]{Example}
\theoremstyle{remark}
\newtheorem{problem}[theorem]{Problem}
\renewcommand{\phi}{\varphi}
\newcommand{\initial}\lessdot
\def\?{?\vadjust

{\vbox to 0pt{\vskip-7pt\hbox to 1.1\hsize{\hfill\huge ?!}}}}
\newcommand{\be}{\begin{enumerate}}
\newcommand{\ee}{\end{enumerate}}
\renewcommand{\epsilon}{\varepsilon}
 \def\nfork{\setbox0\hbox{$\bigcup$}%
 \setbox1=\hbox to \wd0{\hfil\vrule width 0.7pt depth 2pt height 7.5pt\hfil}%
 \wd1=0cm\relax\box1\box0}
\begin{document}

\title{The First Time KE is Broken Up}

\author{Adi Jarden}
\email[Adi Jarden]{jardena@ariel.ac.il}
\address{Department of Mathematics.\\ Ariel University \\ Ariel, Israel}

\maketitle

\tableofcontents

\today

\begin{abstract}
A relevant collection is a collection, $F$, of sets, such that each set in $F$ has the same cardinality, $\alpha(F)$. A Konig Egervary (KE) collection is a relevant collection $F$, that satisfies $|\bigcup F|+|\bigcap F|=2\alpha(F)$. An hke (hereditary KE) collection is a relevant collection such that all of his non-empty subsets are KE collections. The motivation for the study of hke collections comes from results of Jarden, Levit and Mandrescu \cite{jlm} and \cite{dam}. In \cite{hke}, Jarden characterize hke collections. 

Let $\Gamma$ be a relevant collection such that $\Gamma-\{S\}$ is an hke collection, for every $S \in \Gamma$. We study the difference between $|\bigcap \Gamma_1-\bigcup \Gamma_2|$ and $|\bigcap \Gamma_2-\bigcup \Gamma_1|$, where $\{\Gamma_1,\Gamma_2\}$ is a partition of $\Gamma$. We get new characterizations for an hke collection and for a KE graph.      
\end{abstract}

\section{Introduction}
We first present three definitions relating to collections and then discuss their connections to the study of graphs.

\begin{definition}
A \emph{relevant collection} is a finite collection of finite sets such that the number of elements in each set in $F$ is a constant positive integer, denoted $\alpha(F)$. When $F$ is clear from the context, we omit it, writting $\alpha$.
\end{definition}

While in \cite{dam}, a KE collection is defined with respect to a graph, the definition here and in \cite{hke} has no an explicit connection to a graph. The two different definitions are contradict.
\begin{definition}
Let $F$ be a relevant collection. $F$ is said to be a \emph{Konig Egervary collection} (KE collection in short), if the following equality
 holds: $$|\bigcup F|+|\bigcap F|=2\alpha.$$
\end{definition}

\begin{definition}\label{definition of a KE collection}
An \emph{hereditary Konig Egervary collection} (hke collection in short) is a collection of sets, $F$, such that for some positive integer, $\alpha$, the equality
$$|\bigcup \Gamma|+|\bigcap \Gamma|=2\alpha$$ holds for every non-empty sub-collection, $\Gamma$, of $F$. We call this $\alpha$, $\alpha(F)$. We may omit $F$, where it is clear from the context.
\end{definition}
 
For example, every relevant collection of cardinality $2$ at most is an hke collection. 
 
We now discuss graphs. A set $S$ of vertices in a graph is independent if no two vertices from $S$ are adjacent. A maximum independent set is an independent set of maximal cardinality. $\Omega(G)$ denotes the set of maximum independent sets in $G$. A matching is a set of pairwise non-incident edges. The graph $G$ is known to be a Konig-Egervary (KE in short) graph if $\alpha(G) + \mu(G)= |V(G)|$, where $\alpha(G)$ denotes the size of a maximum independent set and $\mu(G)$ is the cardinality of a maximum matching.

As a motivation for the study of hke collections, we restate two theorems:  
\begin{theorem}\cite[Theorem 2.6]{dam}\label{dam 2.6}
$G$ is a KE graph if and only if for some hke collection of maximum independent sets, there is a matching $$M:V(G)-\bigcup \Gamma \to \bigcap \Gamma.$$
\end{theorem}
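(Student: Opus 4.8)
The plan is to prove both implications by reducing the König--Egerváry condition $\alpha(G)+\mu(G)=|V(G)|$ to the construction of a matching of size $|V(G)|-\alpha$, where $\alpha=\alpha(G)$. I will use throughout the elementary facts that $\alpha(G)+\mu(G)\le |V(G)|$ for every graph (a consequence of the Gallai identity $\alpha(G)+\tau(G)=|V(G)|$ together with $\mu(G)\le\tau(G)$, where $\tau$ denotes the minimum size of a vertex cover), and that $S\subseteq V(G)$ is a maximum independent set if and only if $V(G)\setminus S$ is a minimum vertex cover. Thus in each direction it suffices to exhibit a matching of size exactly $|V(G)|-\alpha$.

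For the direction $(\Rightarrow)$, assume $G$ is a KE graph, so that $\tau(G)=\mu(G)$. I would fix a minimum vertex cover $C$ and a maximum matching $M$ with $|C|=|M|=\mu(G)$, and put $S=V(G)\setminus C$, a maximum independent set. A short counting argument shows $M$ already has the required shape: every edge of $M$ meets $C$, the edges of $M$ are pairwise disjoint, and $|M|=|C|$, so summing the (positive) numbers of $C$-endpoints over the $\mu(G)$ edges of $M$ is at most $|C|$ yet at least $|M|=|C|$; hence each edge has exactly one endpoint in $C$ and one in $S$, and every vertex of $C$ is matched. Therefore $M$ is a matching from $C=V(G)\setminus S$ into $S$. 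Taking the singleton collection $\Gamma=\{S\}$, which is hke because every relevant collection of cardinality at most $2$ is, we have $\bigcup\Gamma=\bigcap\Gamma=S$ and $M:V(G)\setminus\bigcup\Gamma\to\bigcap\Gamma$ is the desired matching.

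For the direction $(\Leftarrow)$, assume $\Gamma$ is an hke collection of maximum independent sets together with a matching $M:V(G)\setminus\bigcup\Gamma\to\bigcap\Gamma$; then $\alpha(\Gamma)=\alpha$, and since $\Gamma$ itself is a KE collection, $|\bigcup\Gamma|+|\bigcap\Gamma|=2\alpha$. The matching $M$ saturates $V(G)\setminus\bigcup\Gamma$ with $|V(G)|-|\bigcup\Gamma|$ edges, and its two vertex classes avoid $\bigcup\Gamma\setminus\bigcap\Gamma$. A direct computation from the KE equality gives $|\bigcup\Gamma\setminus\bigcap\Gamma|=2(\alpha-|\bigcap\Gamma|)$, so if I can find a perfect matching $M'$ of the subgraph induced on $\bigcup\Gamma\setminus\bigcap\Gamma$, then $M\cup M'$ is a matching (its parts are vertex-disjoint) of size $(|V(G)|-|\bigcup\Gamma|)+(\alpha-|\bigcap\Gamma|)=|V(G)|-\alpha$, proving $G$ is KE. To produce $M'$ I would use an exchange lemma: for any two maximum independent sets $A,B$, the bipartite graph of $G$-edges between $A\setminus B$ and $B\setminus A$ has a perfect matching, since otherwise Hall's condition fails for some $X\subseteq A\setminus B$ and $(B\setminus N(X))\cup X$ is an independent set of size greater than $\alpha$, a contradiction. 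Consequently it is enough to find two maximum independent sets $A$ and $B$ with $A\cup B=\bigcup\Gamma$ and $A\cap B=\bigcap\Gamma$, for then $A\setminus B$ and $B\setminus A$ partition $\bigcup\Gamma\setminus\bigcap\Gamma$ and the lemma supplies $M'$.

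The crux, and the step I expect to be the main obstacle, is exactly the existence of such a pair $A,B$ realizing the union and intersection of the whole collection — equivalently, the existence of a perfect matching on $\bigcup\Gamma\setminus\bigcap\Gamma$. This is where the \emph{hereditary} part of the hypothesis must enter: the plan is to argue by induction on $|\Gamma|$, passing from $\Gamma$ to a subcollection $\Gamma\setminus\{S\}$ (again hke) and adjusting the realizing pair as the set $S$ is reinserted, keeping the region sizes controlled by the KE equalities that the hke hypothesis grants for every subcollection. For the bookkeeping of these counts I would lean on the structural characterization of hke collections established in \cite{hke}, which describes how the regions of the Venn diagram of an hke collection are balanced (for a triple $\{S_1,S_2,S_3\}$, for instance, the hke equalities already force $|S_i\setminus(S_j\cup S_k)|=|(S_j\cap S_k)\setminus S_i|$), and from which the pair $A,B$ can be read off.
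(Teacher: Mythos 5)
First, a caveat: the paper itself does not prove this statement --- it is quoted from \cite{dam} (Theorem 2.6 there) and used as a black box --- so your attempt can only be judged on its own merits, not compared against an internal proof.

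Your forward direction is complete and correct: in a KE graph every maximum matching necessarily matches the minimum vertex cover $V(G)\setminus S$ into the maximum independent set $S$, and the singleton $\{S\}$ is an hke collection witnessing the right-hand side. Your backward direction is also correctly set up as far as it goes: since $|\bigcup\Gamma|+|\bigcap\Gamma|=2\alpha$, a perfect matching $M'$ of $G[\bigcup\Gamma\setminus\bigcap\Gamma]$ would combine with $M$ into a matching of size $|V(G)|-\alpha$, and your Hall-type exchange lemma for two maximum independent sets is a correct, standard fact.

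The gap is the step you yourself flag as the crux, and it is fatal to the route you propose: two maximum independent sets $A,B$ with $A\cup B=\bigcup\Gamma$ and $A\cap B=\bigcap\Gamma$ need \emph{not} exist under the theorem's hypotheses; in particular their existence is not ``equivalent'' to the existence of $M'$, as your parenthetical claims --- it is strictly stronger. Concretely, let $V(G)=\{v_1,\dots,v_5,w_{12},w_{13},w_{23}\}$, with edges the five-cycle $v_1v_2v_3v_4v_5v_1$ together with $v_1w_{23}$, $v_2w_{13}$, $v_5w_{12}$, and let $S_1=\{v_1,v_3,w_{12},w_{13}\}$, $S_2=\{v_2,v_4,w_{12},w_{23}\}$, $S_3=\{v_3,v_5,w_{13},w_{23}\}$, $\Gamma=\{S_1,S_2,S_3\}$. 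Each $S_i$ is independent, and $\alpha(G)=4$: an independent set of size $5$ would need all three $w$'s plus an independent pair of the $C_5$, and each of the five such pairs meets one of the three extra edges. So all $S_i$ are maximum, and $\Gamma$ is hke, since pairs are automatic and $|\bigcup\Gamma|+|\bigcap\Gamma|=8+0=2\alpha$. Here $\bigcup\Gamma=V(G)$ and $\bigcap\Gamma=\emptyset$, so the matching $M$ in the hypothesis is the empty one and the full hypothesis of the hard direction holds; the conclusion holds too, since $\{v_1w_{23},v_2w_{13},v_5w_{12},v_3v_4\}$ is a perfect matching and $G$ is KE. Yet no pair of independent sets $A,B$ (maximum or not) can satisfy $A\cup B=V(G)$ and $A\cap B=\emptyset$: that would make $G$ bipartite, while $G$ contains an odd cycle. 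So the matching $M'$ you need exists in this example but cannot be reached by your exchange-lemma reduction, and no induction on $|\Gamma|$ based on the hke region equalities can repair this: those equalities (as in Theorem \ref{the duality in an hke collection}) are purely cardinality statements about the Venn regions and carry no information about the edges of $G$, which is exactly what independence of $A$ and $B$ requires. The hard direction therefore remains unproved in your proposal, and it needs a genuinely different construction of the matching inside $\bigcup\Gamma\setminus\bigcap\Gamma$.
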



\begin{theorem}\cite[Theorem 6.3]{hke}
A collection $F$ of sets is an hke collection if and only if $F$ is a KE collection and $F \subseteq \Omega(G)$ for some graph $G$.
\end{theorem}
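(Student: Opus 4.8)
The plan is to prove both implications and to package the quantity that measures failure of the KE condition. Write $U=\bigcup F$, $I=\bigcap F$, and for a nonempty subcollection $\Gamma$ put
$$g(\Gamma)=\Big|\bigcup \Gamma\Big|+\Big|\bigcap \Gamma\Big|-2\alpha,$$
so that $\Gamma$ is a KE collection exactly when $g(\Gamma)=0$, and $F$ is hke exactly when $g$ vanishes on every nonempty $\Gamma\subseteq F$. The easy content of the forward direction is then immediate: if $F$ is hke, applying the defining equality to $\Gamma=F$ shows $g(F)=0$, i.e.\ $F$ is a KE collection, so only the existence of a graph $G$ with $F\subseteq \Omega(G)$ needs work.

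For the reverse direction, assume $F$ is KE and $F\subseteq\Omega(G)$; I would show $g$ is monotone and nonnegative on subcollections of $\Omega(G)$, then squeeze. The base case is a single maximum independent set $S$, where $g(\{S\})=2\alpha-2\alpha=0$. The heart of the matter is the monotonicity lemma: for nonempty $\Gamma\subseteq\Omega(G)$ and $T\in\Omega(G)$,
$$g(\Gamma\cup\{T\})-g(\Gamma)=\Big|T\setminus\bigcup\Gamma\Big|-\Big|\bigcap\Gamma\setminus T\Big|\ge 0.$$
To prove $|T\setminus\bigcup\Gamma|\ge|\bigcap\Gamma\setminus T|$, set $X=\bigcap\Gamma\setminus T$ and consider the independent set $(T\setminus N_G(X))\cup X$; since $T$ is maximum, comparing cardinalities forces $|X|\le|T\cap N_G(X)|$. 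Moreover every vertex of $T$ adjacent to $X$ lies outside $\bigcup\Gamma$: otherwise it would share some $S\in\Gamma$ with its neighbor in $X\subseteq\bigcap\Gamma\subseteq S$, contradicting independence of $S$. Hence $T\cap N_G(X)\subseteq T\setminus\bigcup\Gamma$ and the inequality follows. Monotonicity plus the base case give $g(\Gamma)\ge 0$ for all nonempty $\Gamma\subseteq\Omega(G)$, and since $g(F)=0$ we get $0\le g(\Gamma)\le g(F)=0$ for every nonempty $\Gamma\subseteq F$; thus $F$ is hke.

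For the forward direction I would build $G$ explicitly. First reduce to $I=\emptyset$: the elements of $I$ belong to every member of $F$ and should become isolated vertices, so removing them yields $F'=\{S\setminus I:S\in F\}$, which is again hke with parameter $\beta=\alpha-|I|$ and empty intersection, and a graph realizing $F'$ together with $|I|$ isolated vertices realizes $F$. In the reduced case the hke equalities become, for every $\Gamma$, $|\bigcap_{S\in\Gamma}S|=|\{v:v\notin S\text{ for all }S\in\Gamma\}|$. Encoding each $v\in U$ by its pattern $P(v)=\{S\in F:v\in S\}$ and letting $n_\pi$ count vertices with pattern $\pi$, this reads $\sum_{\pi\supseteq\Gamma}(n_\pi-n_{F\setminus\pi})=0$ for all $\Gamma$; Möbius inversion over the subset lattice of $F$ then gives $n_\pi=n_{F\setminus\pi}$ for every pattern $\pi$. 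I can therefore pair each vertex with one of complementary pattern, obtaining a perfect matching $M$ on $U$. Taking $G=M$ (and restoring the isolated core), each $S\in F$ meets every edge of $M$ in exactly one endpoint, since $S$ lies in precisely one of $\pi,\,F\setminus\pi$; hence $S$ is an independent transversal of $M$, so $S\in\Omega(G)$ with $\alpha(G)=\beta$ (resp.\ $\alpha$ after restoring the core).

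The main obstacle is the monotonicity lemma of the reverse direction, i.e.\ the exchange inequality $|T\setminus\bigcup\Gamma|\ge|\bigcap\Gamma\setminus T|$. This is the only place where \emph{maximality} of the members of $F$ (not merely that they are independent) is genuinely used, and the delicate point is the neighborhood bookkeeping: one must argue that neighbors in $T$ of the ``lost'' core $X$ cannot re-enter $\bigcup\Gamma$, which is exactly what lets the single global KE equality $g(F)=0$ propagate down to all subcollections. On the forward side the subtlety to watch is that the constructed sets be \emph{maximum} rather than merely maximal, and this is precisely what the complementary-pattern perfect matching guarantees.
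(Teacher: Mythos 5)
The paper does not prove this statement at all---it is imported from \cite[Theorem 6.3]{hke}---so your proposal can only be measured against the machinery the paper quotes, and against that standard it holds up: the argument is correct, and it is in effect a self-contained re-derivation of both quoted ingredients. In the reverse direction, your monotonicity lemma $g(\Gamma\cup\{T\})-g(\Gamma)=|T\setminus\bigcup\Gamma|-|\bigcap\Gamma\setminus T|\ge 0$, proved by the exchange $T\mapsto (T\setminus N_G(X))\cup X$ together with the observation $T\cap N_G(X)\subseteq T\setminus\bigcup\Gamma$, is exactly the special case $\Gamma\vartriangleleft\Gamma\cup\{T\}$ of Theorem \ref{monotonicity} (quoted from \cite{jlm}); citing that theorem would collapse the whole direction to the squeeze $2\alpha=e(\{S\})\le e(\Gamma)\le e(F)=2\alpha$ for every non-empty $\Gamma\subseteq F$. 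In the forward direction, after your (correct) reduction to $\bigcap F=\emptyset$ (write $F$ again for the reduced collection), the identity $n_\pi=n_{F\setminus\pi}$ that you obtain by M\"obius inversion is precisely what Theorem \ref{the duality in an hke collection} (quoted from \cite[Theorem 2.13]{hke}) gives when applied to the partition $\{\pi,F\setminus\pi\}$, because $\bigcap\pi-\bigcup(F\setminus\pi)$ is exactly the set of vertices whose pattern is $\pi$; so your inversion independently re-proves that special case of the duality theorem, and the rest---a perfect matching joining vertices of complementary patterns, so that each $S\in F$ is a transversal of the matching and hence a maximum independent set of the resulting graph---is sound. Your route buys self-containedness (nothing is assumed beyond the definitions); the quoted-theorem route buys brevity. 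Two small points to polish: the degenerate case where $F$ consists of a single set (there the reduced collection is $\{\emptyset\}$, whose parameter $\beta=0$ violates the positivity requirement in the paper's definition of an hke collection, although your construction still works trivially), and an explicit remark that $n_\emptyset=n_{F}=0$ (every vertex lies in some member, and none lies in all of them since the intersection is empty), which is what guarantees that your matching really covers all of $\bigcup F$.
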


\begin{definition}
For every collection $\Gamma$ of sets, we define $$e(\Gamma)=|\bigcup \Gamma|+|\bigcap \Gamma|.$$
\end{definition}

A relevant collection $\Gamma$ is a KE collection if and only if $e(\Gamma)=2\alpha$. 
Theorem \ref{monotonicity} shows the importance of the function $e$. In order to state Theorem \ref{monotonicity}, we present the following definition: 
\begin{definition}
Let $\Gamma,\Gamma^{\prime}$ be two collections of sets. We set $\Gamma
^{\prime}\vartriangleleft\Gamma$ if $%
{\displaystyle\bigcup}
\Gamma^{\prime}\subseteq%
{\displaystyle\bigcup}
\Gamma$ and $%
{\displaystyle\bigcap}
\Gamma\subseteq%
{\displaystyle\bigcap}
\Gamma^{\prime}$.
\end{definition}

Obviously, if $\Gamma' \subseteq \Gamma$ then $\Gamma' \vartriangleleft \Gamma$. 

\begin{theorem}\label{monotonicity}\cite[Theorem 2.6]{jlm}
Let $G$ be a graph. Let $\Gamma \subseteq \Omega(G)$ and let $\Gamma'$ be a collection of independent sets. If $\Gamma' \vartriangleleft \Gamma$ then $e(\Gamma') \leq e(\Gamma)$.
\end{theorem}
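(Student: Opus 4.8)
The plan is to recast the desired inequality so that it becomes a matching problem. Write $U=\bigcup\Gamma$, $I=\bigcap\Gamma$, $U'=\bigcup\Gamma'$, $I'=\bigcap\Gamma'$. Since $\Gamma'\vartriangleleft\Gamma$ gives $I\subseteq I'$ and $U'\subseteq U$, the conclusion $e(\Gamma')\le e(\Gamma)$, i.e. $|U'|+|I'|\le|U|+|I|$, is equivalent to $|I'\setminus I|\le|U\setminus U'|$. Moreover $I'\setminus I\subseteq I'\subseteq U'$ while $U\setminus U'$ avoids $U'$, so these two sets are disjoint, and it suffices to produce an injection from $I'\setminus I$ into $U\setminus U'$. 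I will aim for the stronger statement that $G$ carries a matching saturating $I'\setminus I$ whose partners all lie in $U\setminus U'$.

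Two ingredients will drive the construction. The first is a local replacement lemma: if $S\in\Omega(G)$ and $T$ is any independent set, then $G$ admits a matching saturating $T\setminus S$ with all partners in $S\setminus T$, so in particular $|T\setminus S|\le|S\setminus T|$. To prove it I would verify Hall's condition on the bipartite graph with sides $T\setminus S$ and $S\setminus T$: for $X\subseteq T\setminus S$, independence of $T$ forces $N(X)\cap T=\varnothing$, whence $N(X)\cap S=N(X)\cap(S\setminus T)$; were this set smaller than $X$, the set $(S\setminus N(X))\cup X$ would be independent of size $|S|-|N(X)\cap S|+|X|>|S|=\alpha(G)$, contradicting maximality. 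Here $N(X)$ denotes the vertices adjacent to a vertex of $X$. The second ingredient is a no-neighbour fact: because $I'=\bigcap\Gamma'$ is contained in every (independent) member of $\Gamma'$, no vertex of $U'=\bigcup\Gamma'$ is adjacent to a vertex of $I'$, i.e. $N(I')\cap U'=\varnothing$; consequently every neighbour of $I'$ lying in $U$ in fact lies in $U\setminus U'$.

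These ingredients already settle the case $\Gamma'\subseteq\Gamma$ cleanly, by deleting the members of $\Gamma$ one at a time. If $S\in\Gamma$ and $\Gamma_0=\Gamma\setminus\{S\}$, with $U_0=\bigcup\Gamma_0$, $I_0=\bigcap\Gamma_0$, then $e(\Gamma)-e(\Gamma_0)=|S\setminus U_0|-|I_0\setminus S|$. Applying the no-neighbour fact to $\Gamma_0$ gives $N(I_0)\cap U_0=\varnothing$, so the partners produced by the replacement lemma for $S$ and $T=I_0$ land in $S\setminus U_0$; hence $|I_0\setminus S|\le|N(I_0)\cap S|\le|S\setminus U_0|$ and $e(\Gamma)\ge e(\Gamma_0)$. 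Iterating down to a single member yields $2\alpha(G)=e(\{S\})\le e(\Gamma)$ and, more generally, monotonicity along subsets; in particular for $|\Gamma'|\le 2$ a direct computation gives $e(\Gamma')\le 2\alpha(G)\le e(\Gamma)$, so the genuine content of the theorem lies in collections $\Gamma'$ of three or more independent sets that are not subsets of $\Gamma$.

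For such $\Gamma'$ I return to the matching. For each $S\in\Gamma$ the replacement lemma with $T=I'$ saturates $(I'\setminus I)\setminus S=I'\setminus S$ (using $I\subseteq S$) with partners that, by the no-neighbour fact, lie in $U\setminus U'$, and as $S$ ranges over $\Gamma$ these sets cover $I'\setminus I$, since every vertex of $I'\setminus I$ is missing from at least one member of $\Gamma$. The remaining task---fusing the per-member matchings into a single matching saturating all of $I'\setminus I$, equivalently verifying Hall's condition for the bipartite graph between $I'\setminus I$ and $U\setminus U'$---is the step I expect to be the main obstacle, because the replacement lemma only yields the per-member estimate $|X\setminus S|\le|N(X)\cap(U\setminus U')|$, and summing over $S\in\Gamma$ loses a factor of $|\Gamma|$. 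To close the gap I would argue by contradiction from a Hall-violating set $X\subseteq I'\setminus I$ with $|N(X)\cap(U\setminus U')|<|X|$, and manufacture an independent set larger than $\alpha(G)$ by exchanging $X$ into the members of $\Gamma$ simultaneously, rerouting the displaced vertices through the perfect matchings that the replacement lemma guarantees on each symmetric difference $S_i\triangle S_j$ of two maximum independent sets. An alternating-path analysis of these matchings should convert the positive deficiency $|X|-|N(X)\cap(U\setminus U')|$ into a genuine enlargement, contradicting maximality and thereby establishing Hall's condition.
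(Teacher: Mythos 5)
Your reduction of the inequality to an injection from $\bigcap\Gamma'\setminus\bigcap\Gamma$ into $\bigcup\Gamma\setminus\bigcup\Gamma'$ is correct, and both of your ingredients are sound: the replacement lemma for a single maximum independent set (with the Hall-condition proof you sketch) and the no-neighbour fact $N\bigl(\bigcap\Gamma'\bigr)\cap\bigcup\Gamma'=\varnothing$. These do settle the case $\Gamma'\subseteq\Gamma$ by the one-at-a-time deletion you describe. But note that this statement is a quoted result (Theorem 2.6 of the cited paper) for which the present paper supplies no proof at all, so the burden is entirely on your argument, and your argument for the general case --- the only case with real content --- stops exactly where the difficulty begins. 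You flag it yourself: Hall's condition for the bipartite graph between $I'\setminus I$ and $U\setminus U'$ does not follow from the per-member estimates, since summing them loses a factor of $|\Gamma|$ (your two-vertex example $x_1\in S_1\setminus S_2$, $x_2\in S_2\setminus S_1$ shows the per-member bounds genuinely fall short). The proposed repair --- exchanging a Hall-violating set $X$ into all members of $\Gamma$ simultaneously and rerouting displaced vertices along alternating paths --- is a plan, not a proof: nothing in the proposal explains how the displaced vertices coming from different members of $\Gamma$ avoid colliding, or why the rerouting terminates in an independent set larger than $\alpha(G)$ rather than in a cycle of exchanges.

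The missing step has a name: it is essentially the Set and Collection Lemma of Levit and Mandrescu, which underlies the proof in the cited source. That lemma says that for any independent set $S$ and any collection $\Lambda$ of maximum independent sets there is a matching of $S\setminus\bigcap\Lambda$ into $\bigcup\Lambda\setminus S$. Granting it, the theorem follows in two lines by your own no-neighbour fact: take $S=\bigcap\Gamma'$ to get a matching saturating $I'\setminus I$ with partners in $U\setminus I'$; since the partners are neighbours of $I'$ they cannot lie in $U'$, hence they lie in $U\setminus U'$, giving $|I'\setminus I|\le|U\setminus U'|$ as required. So your outline has the right shape, but the step you postponed as ``the main obstacle'' is not a technical verification to be filled in later --- it is the entire content of the theorem, and it requires a genuine inductive argument on the collection (or an equivalent), which the proposal does not contain.
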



\section{The First Cardinality without the KE Property}

By Theorems \ref{broken} and \ref{may 2015 2}, the KE property is broken up in a unified form.

\begin{theorem}\label{broken}
Let $\Gamma$ be a collection of sets. Assume that there is a positive integer $\beta$ such that for every $S \in \Gamma$, we have $e(\Gamma-\{S\})=\beta$. Then there is an integer $m$ such that for every $S \in \Gamma$, the following equality holds:
$$|S-\bigcup (\Gamma-\{S\})|-|\bigcap (\Gamma-\{S\})-S|=m.$$

If, in addition, for some (equivalently, for each) $S \in \Gamma$, the collection $\Gamma-\{S\}$ is a KE collection then $m=0$ if and only if $\Gamma$ is a KE collection. 
\end{theorem}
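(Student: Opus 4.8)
The plan is to isolate the effect on the quantity $e$ of deleting a single set from $\Gamma$, and then to observe that this effect is exactly the expression appearing in the statement. Fix $S \in \Gamma$ and write $\Gamma_S := \Gamma - \{S\}$, assuming $\Gamma_S \neq \emptyset$ so that $\bigcap \Gamma_S$ is defined. Since $\bigcup \Gamma = S \cup \bigcup \Gamma_S$ and the portion of $S$ lying outside $\bigcup \Gamma_S$ is disjoint from $\bigcup \Gamma_S$, I would first record
\[ \textstyle |\bigcup \Gamma| = |\bigcup \Gamma_S| + |S - \bigcup \Gamma_S|. \]
Dually, from $\bigcap \Gamma = S \cap \bigcap \Gamma_S$ together with the disjoint splitting $\bigcap \Gamma_S = (S \cap \bigcap \Gamma_S) \cup (\bigcap \Gamma_S - S)$, I would record
\[ \textstyle |\bigcap \Gamma| = |\bigcap \Gamma_S| - |\bigcap \Gamma_S - S|. \]

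Adding these two cardinality identities yields the central formula
\[ \textstyle e(\Gamma) = e(\Gamma_S) + \big( |S - \bigcup \Gamma_S| - |\bigcap \Gamma_S - S| \big), \]
which is the whole engine of the argument. Now I would exploit the asymmetry: the left-hand side $e(\Gamma)$ plainly does not depend on the choice of $S$, and by hypothesis $e(\Gamma_S) = \beta$ is also independent of $S$. Hence the bracketed term is forced to equal $e(\Gamma) - \beta$ for every $S \in \Gamma$, which is precisely the first assertion, with the explicit value $m = e(\Gamma) - \beta$.

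For the second assertion I would simply substitute. Under the standing relevance assumption, each deleted collection $\Gamma_S$ is relevant with the same common cardinality $\alpha = \alpha(\Gamma)$, so the statement ``$\Gamma_S$ is a KE collection'' unfolds to the single condition $\beta = e(\Gamma_S) = 2\alpha$, which makes no reference to $S$; this is exactly the parenthetical ``some (equivalently, for each).'' Assuming this, I have $m = e(\Gamma) - \beta = e(\Gamma) - 2\alpha$, while $\Gamma$ (relevant with cardinality $\alpha$) is a KE collection iff $e(\Gamma) = 2\alpha$. Therefore $m = 0 \iff e(\Gamma) = 2\alpha \iff \Gamma$ is a KE collection, completing the proof.

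I do not anticipate a serious obstacle, since the mathematical content is concentrated in the bookkeeping identity above. The only points needing care are the degenerate cases: one must ensure $\Gamma_S$ is non-empty so that $\bigcap \Gamma_S$ is meaningful (so it is natural to assume $|\Gamma| \geq 2$), and one must verify that the two set decompositions of $S$ and of $\bigcap \Gamma_S$ are genuinely disjoint before summing cardinalities. Everything beyond this is immediate arithmetic.
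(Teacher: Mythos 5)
Your proposal is correct and follows essentially the same route as the paper: both rest on the single identity $e(\Gamma)-e(\Gamma-\{S\})=|S-\bigcup(\Gamma-\{S\})|-|\bigcap(\Gamma-\{S\})-S|$, conclude that $m=e(\Gamma)-\beta$ is independent of $S$, and obtain the second assertion by substituting $\beta=2\alpha$. The only difference is cosmetic: the paper asserts this identity outright, whereas you derive it explicitly from the two disjoint decompositions of $\bigcup\Gamma$ and $\bigcap(\Gamma-\{S\})$, which is a welcome addition but not a new argument.
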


\begin{proof}
For every $S \in \Gamma$, we have
$$e(\Gamma)-e(\Gamma-\{S\})=|S-\bigcup (\Gamma-\{S\})|-|\bigcap (\Gamma-\{S\})-S|.$$
By assumption, $e(\Gamma)-e(\Gamma-\{S\})$ does not depend on $S$. So there exists $m$ as needed. 

Assume that for some $S \in \Gamma$, the collection $\Gamma-\{S\}$ is a KE collection. Then $\beta=2\alpha$ and
$$e(\Gamma)-2\alpha=e(\Gamma)-e(\Gamma-\{S\})=m.$$

Therefore $m=0$ if and only if $e(\Gamma)=2\alpha$.
\end{proof}

\begin{example}
$$\Gamma=\{\{1\},\{2\},\{3\}\}.$$
$\alpha(\Gamma)=1$. $e(\Gamma)=3$ and for every $S \in \Gamma$, $e(\Gamma-\{S\})=2$. So $m=1$. 
\end{example}

By the following example, $m$ can be a negative number.
\begin{example}
$\Gamma=\{S_1,S_2,S_3\}$, where $S_1=\{1,2,3,4,5\}$, $S_2=\{4,5,6,7,8\}$ and $S_3=\{3,6,7,8,9\}$. For this non-KE collection, $\alpha=5$, $k=3$ and $m=-1$, where $k$ and $m$ are the parameters that appear in Proposition \ref{broken}.
\end{example}
 
\begin{corollary}
Let $\Gamma=\{A,B,C\}$ be a relevant collection of three different sets. Then $\Gamma$ is an hke collection if and only if 
$$|A-B-C|=|B \cap C-A|.$$  
\end{corollary}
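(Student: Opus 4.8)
The plan is to reduce this statement about hke collections to a single application of Theorem \ref{broken}. First I would observe that among the non-empty subcollections of $\Gamma = \{A,B,C\}$, every proper one has cardinality at most two, and the excerpt already records that every relevant collection of cardinality at most two is an hke (in particular, a KE) collection. Hence the only subcollection whose KE status is genuinely in question is $\Gamma$ itself, so $\Gamma$ is an hke collection if and only if $\Gamma$ is a KE collection. This reduces the problem to characterizing when the full triple is KE.

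Next I would verify that $\Gamma$ meets the hypothesis of Theorem \ref{broken}. For each $S \in \Gamma$ the deletion $\Gamma - \{S\}$ is a two-element relevant collection, hence a KE collection, so by the inclusion-exclusion identity $|B \cup C| + |B \cap C| = |B| + |C| = 2\alpha$ (and its analogues) we get $e(\Gamma - \{S\}) = 2\alpha$ for every $S$. Thus we may take $\beta = 2\alpha$. Theorem \ref{broken} then supplies an integer $m$ with $|S - \bigcup(\Gamma - \{S\})| - |\bigcap(\Gamma - \{S\}) - S| = m$ for every $S$, and, since each $\Gamma - \{S\}$ is KE, its second part yields precisely that $m = 0$ if and only if $\Gamma$ is a KE collection.

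Finally I would evaluate $m$ by choosing $S = A$, for which $\Gamma - \{A\} = \{B,C\}$, so that $\bigcup(\Gamma - \{A\}) = B \cup C$ and $\bigcap(\Gamma - \{A\}) = B \cap C$. Since $A - (B \cup C) = A - B - C$ and $(B \cap C) - A = B \cap C - A$, this gives $m = |A - B - C| - |B \cap C - A|$. Therefore $m = 0$ is exactly the asserted equality $|A - B - C| = |B \cap C - A|$, and stringing the three steps together closes the equivalence in both directions.

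I do not anticipate a serious obstacle, as the substantive work is packaged into Theorem \ref{broken}. The only points that demand care are confirming that the hypothesis $e(\Gamma - \{S\}) = \beta$ holds uniformly across all three deletions, which rests on the pairwise inclusion-exclusion identity above, and noting that the ``three different sets'' assumption serves only to guarantee $|\Gamma| = 3$, so that the subcollections are genuinely as enumerated.
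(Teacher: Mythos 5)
Your proof is correct and follows essentially the same route as the paper: both verify that each two-element deletion $\Gamma-\{S\}$ is KE so that Theorem \ref{broken} applies with $\beta=2\alpha$, and then identify $m=0$ with the stated equality by taking $S=A$. Your version is in fact slightly more careful than the paper's, since you explicitly record the reduction that $\Gamma$ is hke if and only if $\Gamma$ is KE (all proper non-empty subcollections being automatically KE), a step the paper leaves implicit when it passes from Theorem \ref{broken}'s ``KE'' conclusion to the ``hke'' conclusion of the corollary.
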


\begin{proof} 
For every $S \in \Gamma$, the relevant collection $\Gamma-\{S\}$ is an hke collection, because its cardinality is two. So by Theorem \ref{broken}, there is an $m$ such that for every $S \in \Gamma$, the following equality holds:
$$|S-\bigcup (\Gamma-\{S\})|-|\bigcap (\Gamma-\{S\})-S|=m.$$

Moreover $m=0$ if and only if $\Gamma$ is an hke collection. But the equality that appears in the corollary holds if and only if $m=0$. 
\end{proof}

While in Theorem \ref{broken}, the main issue is
$$|S-\bigcup (\Gamma-\{S\})|-|\bigcap (\Gamma-\{S\})-S|,$$
in Theorem \ref{may 2015 2} (below), we concentrate on 
$$|\bigcap \Gamma_1-\bigcup \Gamma_2|-|\bigcap \Gamma_2-\bigcup \Gamma_1|,$$
where $\{\Gamma_1,\Gamma_2\}$ is a partition of $\Gamma$ into two non-empty subcollections.

Recall,
\begin{theorem}\cite[Theorem 2.13]{hke}\label{the duality in an hke collection}
Let $\Gamma$ be a collection of sets. Then $\Gamma$ is an hke collection if and only if the equality
$$|\bigcap \Gamma_1-\bigcup \Gamma_2|=|\bigcap \Gamma_2-\bigcup \Gamma_1|$$ holds for every partition $\{\Gamma_1,\Gamma_2\}$ of $\Gamma$  into two non-empty subcollections.
\end{theorem}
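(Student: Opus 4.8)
The plan is to translate both sides of the equivalence into statements about a single combinatorial function and then compare them. Index the members of $\Gamma$ as $S_1,\dots,S_n$ and, for each $x\in\bigcup\Gamma$, call $\{i:x\in S_i\}$ the \emph{support} of $x$. For each $B\subseteq\{1,\dots,n\}$ let $f(B)$ be the number of elements whose support is exactly $B$, and write $\Gamma_J=\{S_i:i\in J\}$. The first thing I would record is the observation that makes everything work: for a partition $\{\Gamma_1,\Gamma_2\}$, an element lies in $\bigcap\Gamma_1-\bigcup\Gamma_2$ precisely when its support equals the index set of $\Gamma_1$. Hence $|\bigcap\Gamma_1-\bigcup\Gamma_2|=f(\Gamma_1)$ and $|\bigcap\Gamma_2-\bigcup\Gamma_1|=f(\Gamma_2)=f(\Gamma_1^{c})$, so the displayed duality is exactly the self-complementary symmetry $f(B)=f(B^{c})$ for every proper non-empty $B$.

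Next I would put the hke condition into the same language. Writing $K=J^{c}$, a short count gives $|\bigcup\Gamma_J|=|\bigcup\Gamma|-\sum_{C\subseteq K}f(C)$ (elements missing every member of $\Gamma_J$ are those with support inside $K$) and $|\bigcap\Gamma_J|=\sum_{C\subseteq K}f(C^{c})$ (reindex the up-set $B\supseteq J$ by $C=B^{c}$). Consequently, for every non-empty $J$,
$$e(\Gamma_J)=|\bigcup\Gamma|+\sum_{C\subseteq K}\bigl(f(C^{c})-f(C)\bigr).$$
Setting $d(C)=f(C^{c})-f(C)$ and $P(K)=\sum_{C\subseteq K}d(C)$, this reads $e(\Gamma_J)=|\bigcup\Gamma|+P(J^{c})$; and as $J$ runs over the non-empty subsets, $J^{c}=K$ runs over all proper subsets of $\{1,\dots,n\}$.

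For the forward direction, assume $\Gamma$ is hke, so $e(\Gamma_J)=2\alpha$ for every non-empty $J$. Then $P(K)=c_0:=2\alpha-|\bigcup\Gamma|$ for every proper $K$ (in particular $d(\emptyset)=P(\emptyset)=c_0$). A straightforward induction on $|C|$ — the discrete analogue of Möbius inversion on the Boolean lattice — then forces $d(C)=0$ for every proper non-empty $C$: the base case is $d(\{i\})=P(\{i\})-P(\emptyset)=0$, and in general $c_0=P(C)=d(\emptyset)+d(C)+\sum_{\emptyset\neq C'\subsetneq C}d(C')=c_0+d(C)$ by the inductive hypothesis, so $d(C)=0$. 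This gives $f(B)=f(B^{c})$, the duality. For the converse, assume the duality, so $d(C)=0$ for all proper non-empty $C$; then $P(K)=d(\emptyset)$ for every proper $K$, whence $e(\Gamma_J)=|\bigcup\Gamma|+d(\emptyset)$ is one and the same constant for every non-empty $J$. Specializing to singletons $J=\{i\}$ yields $2|S_i|=|\bigcup\Gamma|+d(\emptyset)$, so all the $S_i$ share a cardinality $\alpha:=(|\bigcup\Gamma|+d(\emptyset))/2$ and $e(\Gamma_J)=2\alpha$ for every non-empty $J$; that is, $\Gamma$ is hke. Pleasingly, this last step recovers for free both that $\Gamma$ is relevant and that it is KE.

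The step I expect to carry the real weight is the first one: recognizing that $|\bigcap\Gamma_1-\bigcup\Gamma_2|$ counts precisely the elements of support $\Gamma_1$, which collapses the whole problem to the symmetry of the single function $f$. Once that dictionary is in place the equivalence becomes the formal fact that a set function whose down-sums $P(K)$ are constant on all proper subsets must vanish off $\emptyset$; the only bookkeeping I would verify carefully is that $J\mapsto J^{c}$ matches non-empty $J$ with proper $K$, so that the one set where $d$ need not vanish — the full index set — is never required. A more hands-on alternative would induct on $\min(|\Gamma_1|,|\Gamma_2|)$, peeling off one set at a time via Theorem \ref{broken}; but tracking how $\bigcap\Gamma_1-\bigcup\Gamma_2$ changes as a set is moved across the partition is exactly the bookkeeping the support-function viewpoint renders transparent, which is why I would follow the $f$-function route.
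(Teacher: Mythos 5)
You should know at the outset that the paper itself contains no proof of this statement: it is quoted verbatim from \cite[Theorem 2.13]{hke} and used as a black box (notably inside the proofs of Theorem \ref{may 2015 2} and Theorem \ref{less partitions are enough}). So your argument can only be judged as a standalone proof, and as such it is correct. The dictionary $|\bigcap \Gamma_1-\bigcup \Gamma_2|=f(J_1)$ (with $J_1$ the index set of $\Gamma_1$) is exactly right, the identities $|\bigcup \Gamma_J|=|\bigcup \Gamma|-\sum_{C\subseteq K}f(C)$ and $|\bigcap \Gamma_J|=\sum_{C\subseteq K}f(C^{c})$ for $K=J^{c}$ are correct, and your inversion step properly uses only that $P$ is constant on \emph{proper} subsets, so the one value that is never forced to vanish, $d$ of the full index set (which equals $|\bigcap\Gamma|$), is indeed never needed. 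The converse direction is also handled cleanly: constancy of $e(\Gamma_J)$ plus the singleton case recovers that all members of $\Gamma$ share one cardinality, i.e.\ relevance comes for free. Two small caveats. First, the counting apparatus presupposes the sets are finite, and the statement itself is false without some such convention (for two disjoint countably infinite sets the duality holds but no finite $\alpha$ exists); this is consistent with the paper's standing conventions, and your argument even shows that for $|\Gamma|\geq 2$ the duality forces all sets to be non-empty, leaving only the vacuous case $|\Gamma|=1$ genuinely degenerate. Second, the alternative you sketch at the end --- peeling off one set at a time via Theorem \ref{broken} --- is precisely the mechanism this paper uses in Theorem \ref{may 2015 2}, but there it is applied \emph{on top of} the present duality theorem, not to prove it; your support-function route is therefore a genuinely self-contained proof of the cited result, and the M\"obius-style collapse is the cleaner way to get it.
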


If we substitute $\Gamma_1=\{S\}$ and $\Gamma_2=\Gamma-\{S\}$ in Theorem \ref{may 2015 2}, then we get a result that is known by Theorem \ref{broken}. In a sense, Theorem \ref{may 2015 2} is more general than Theorem \ref{broken}, because Theorem \ref{may 2015 2} deals also with $\Gamma_1$ that includes more than one set. But in Theorem \ref{may 2015 2}, we have an extra assumption: $\Gamma-\{S\}$ is an hke collection.

\begin{theorem}\label{may 2015 2}
Let $\Gamma$ be a collection of sets such that $\Gamma-\{S\}$ is an hke collection, for each $S \in \Gamma$. Then there exists an integer $m$ such that the equality
$$|\bigcap \Gamma_1-\bigcup \Gamma_2|-|\bigcap \Gamma_2-\bigcup \Gamma_1|=(-1)^{|\Gamma_1|+1}m$$
holds for every partition $\{\Gamma_1,\Gamma_2\}$ of $\Gamma$ into two non-empty subcollections.  
\end{theorem}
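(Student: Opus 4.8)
The plan is to package every partition into a single quantity and reduce the statement to a sign-flip recursion. For $\emptyset \neq A \subsetneq \Gamma$ set
$$g(A) := \Big|\bigcap A - \bigcup (\Gamma - A)\Big| - \Big|\bigcap (\Gamma - A) - \bigcup A\Big|.$$
A partition $\{\Gamma_1,\Gamma_2\}$ is just the choice $A=\Gamma_1$, $\Gamma-A=\Gamma_2$, so the assertion to prove is exactly $g(A)=(-1)^{|A|+1}m$ for all such $A$. Note $g(\Gamma-A)=-g(A)$ automatically, so the claimed formula is at least self-consistent under swapping the two blocks (and, when $|\Gamma|$ is even, this consistency already forces $m=0$). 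The heart of the argument will be the recursion that enlarging $A$ by one set negates $g$; granting this, I would set $m:=g(\{S\})$ for a singleton and induct on $|A|$.

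The engine is the identity
$$g(B) = -\,g(B \cup \{S\}) \qquad (S \notin B),$$
valid whenever $\{B,\ \Gamma-(B\cup\{S\})\}$ is a partition of $\Gamma-\{S\}$ into two non-empty blocks, i.e. whenever $B\neq\emptyset$ and $B\cup\{S\}\neq\Gamma$. To prove it I would write $\Gamma=B\sqcup\{S\}\sqcup C$ with $C:=\Gamma-(B\cup\{S\})$ and expand the four sets occurring in $g(B)$ and $g(B\cup\{S\})$ via $\bigcup(\Gamma-B)=\bigcup C\cup S$, $\bigcap(\Gamma-B)=\bigcap C\cap S$, $\bigcap(B\cup\{S\})=\bigcap B\cap S$, $\bigcup(B\cup\{S\})=\bigcup B\cup S$. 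Adding $g(B)$ to $g(B\cup\{S\})$ and recombining the two halves of each set through $|X|=|X\cap S|+|X-S|$ collapses the sum to $|\bigcap B-\bigcup C|-|\bigcap C-\bigcup B|$. Since $\Gamma-\{S\}$ is an hke collection and $\{B,C\}$ is a partition of it, Theorem \ref{the duality in an hke collection} makes this difference vanish, which yields the recursion. (Equivalently, counting each element of $\bigcup\Gamma$ by the exact subfamily of $\Gamma$ containing it turns $g(A)$ into $c_A-c_{\Gamma-A}$, and the same duality reads $c_B+c_{B\cup\{S\}}=c_C+c_{C\cup\{S\}}$, which rearranges directly to $g(B)+g(B\cup\{S\})=0$.)

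With the recursion in hand the proof is a downward peeling. For the base case every singleton gives the same value: $g(\{S\})=|S-\bigcup(\Gamma-\{S\})|-|\bigcap(\Gamma-\{S\})-S|$ is precisely the quantity of Theorem \ref{broken}, whose hypothesis holds here because each $\Gamma-\{S\}$ is KE (being hke), so $e(\Gamma-\{S\})=2\alpha$ is independent of $S$; hence $g(\{S\})=m$ for all $S$. For the inductive step, given $A$ with $2\le|A|\le|\Gamma|-1$, pick any $S\in A$ and put $B=A-\{S\}$; then $B\neq\emptyset$ and $B\cup\{S\}=A\neq\Gamma$, so the recursion applies and $g(A)=-g(A-\{S\})=-(-1)^{|A|}m=(-1)^{|A|+1}m$ by the inductive hypothesis. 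This covers all $A$ with $1\le|A|\le|\Gamma|-1$, that is, all partitions.

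The main point to get right is the valid range of the recursion: it may be invoked only when both resulting blocks of $\Gamma-\{S\}$ are non-empty, so the induction must peel elements while keeping $A$ a proper non-empty subfamily at every stage — which the choice $B=A-\{S\}$ with $|A|\le|\Gamma|-1$ guarantees. A secondary point worth checking is that $\Gamma$ is genuinely relevant (all sets of one cardinality $\alpha$): for $|\Gamma|\ge 3$ this is automatic, since any two members lie together in some hke, hence relevant, subfamily $\Gamma-\{R\}$, and this is what legitimizes the appeal to Theorem \ref{broken} and pins down $\beta=2\alpha$ in the base case.
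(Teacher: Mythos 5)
Your proof is correct and follows essentially the same route as the paper's: an induction on the size of one block, where your sign-flip recursion $g(A)=-g(A-\{S\})$ is exactly what the paper obtains by subtracting its two set-theoretic identities and cancelling via Theorem \ref{the duality in an hke collection} applied to the partition $\{A-\{S\},\Gamma-A\}$ of $\Gamma-\{S\}$, with Theorem \ref{broken} supplying the identical base case $g(\{S\})=m$. Your explicit remark that relevance of $\Gamma$ (needed to invoke Theorem \ref{broken} with $\beta=2\alpha$) is automatic when $|\Gamma|\geq 3$ is a detail the paper glosses over, but it does not alter the structure of the argument.
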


\begin{proof}
The assumption of Theorem \ref{broken} holds for $\beta=2\alpha$. So we can find an integer $m$ satisfying the conclusion of Theorem \ref{broken}. We prove the needed equality by induction on $l=|\Gamma_1|$. For $l=1$, it holds by Theorem \ref{broken}. Assume that $1<l<|\Gamma|$ and $\{\Gamma_1,\Gamma_2\}$ is a partition of $\Gamma$ such that $|\Gamma_1|=l$. Fix $S \in \Gamma_1$. Let $\Gamma_1'=\Gamma_1-\{S\}$ and $\Gamma_2'=\Gamma_2 \cup \{S\}$. By set theoretical considerations, we get
 
$$(1) |\bigcap \Gamma_1-\bigcup \Gamma_2|=|\bigcap \Gamma_1'-\bigcup \Gamma_2|-|\bigcap \Gamma_1'-\bigcup \Gamma_2'|$$ and $$(2) |\bigcap \Gamma_2-\bigcup \Gamma_1|=|\bigcap \Gamma_2-\bigcup \Gamma_1'|-|\bigcap \Gamma_2'-\bigcup \Gamma_1'|.$$ 

By assumption, $\Gamma-\{S\}$ is an hke collection. $\{\Gamma_1',\Gamma_2\}$ is a partition of $\Gamma-\{S\}$ into two non-empty subcollections. So by Theorem \ref{the duality in an hke collection}, we have:
$$(3) |\bigcap \Gamma_1'-\bigcup \Gamma_2|=|\bigcap \Gamma_2-\bigcup \Gamma_1'|.$$

Since $|\Gamma_1'|=l-1$ and $\{\Gamma_1',\Gamma_2'\}$ is a partition of $\Gamma$ into two non-empty subcollections, by the induction hypothesis, the following equality holds:
$$(4) |\bigcap \Gamma_1'-\bigcup \Gamma_2'|-|\bigcap \Gamma_2'-\bigcup \Gamma_1'|=(-1)^{|\Gamma_1|}m$$

By the subtraction of Equality $(2)$ from Equality $(1)$, by Equality $(3)$ and by Equality $(4)$, we get:
$$|\bigcap \Gamma_1-\bigcup \Gamma_2|-|\bigcap \Gamma_2-\bigcup \Gamma_1|=(-1)^{|\Gamma_1|+1}m.$$
\end{proof}

\begin{corollary}
Let $\Gamma$ be a collection of sets such that for each $S \in \Gamma$, $\Gamma-\{S\}$ is an hke collection. If $|\Gamma|$ is an even then $\Gamma$ is an hke collection.
\end{corollary}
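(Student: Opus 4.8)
The plan is to combine Theorem~\ref{may 2015 2} with the duality characterization in Theorem~\ref{the duality in an hke collection}. Since the hypothesis is precisely that $\Gamma-\{S\}$ is an hke collection for each $S \in \Gamma$, Theorem~\ref{may 2015 2} applies and supplies an integer $m$ with
$$|\bigcap \Gamma_1-\bigcup \Gamma_2|-|\bigcap \Gamma_2-\bigcup \Gamma_1|=(-1)^{|\Gamma_1|+1}m$$
for every partition $\{\Gamma_1,\Gamma_2\}$ of $\Gamma$ into two non-empty subcollections. By Theorem~\ref{the duality in an hke collection}, $\Gamma$ is an hke collection exactly when the left-hand side vanishes for all such partitions; so it suffices to prove that $m=0$.

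First I would exploit the tension between the fact that a partition is unordered and the fact that the displayed formula is not symmetric in its two parts. Fixing a partition $\{\Gamma_1,\Gamma_2\}$ and applying the formula once in each order, I obtain two equations whose left-hand sides are negatives of one another, while their right-hand sides carry the signs $(-1)^{|\Gamma_1|+1}m$ and $(-1)^{|\Gamma_2|+1}m$. Comparing them yields the single relation $(-1)^{|\Gamma_1|+1}m=-(-1)^{|\Gamma_2|+1}m$.

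The key step is then a parity computation. Because $|\Gamma_1|+|\Gamma_2|=|\Gamma|$ is even, the two cardinalities have the same parity, so the signs coincide: $(-1)^{|\Gamma_1|+1}=(-1)^{|\Gamma_2|+1}$. The relation above collapses to $(-1)^{|\Gamma_1|+1}m=-(-1)^{|\Gamma_1|+1}m$, forcing $2m=0$ and hence $m=0$. I would note in passing that since $|\Gamma|$ is even there is at least one partition into two non-empty parts, so the argument is not vacuous. Substituting $m=0$ into the displayed formula gives $|\bigcap \Gamma_1-\bigcup \Gamma_2|=|\bigcap \Gamma_2-\bigcup \Gamma_1|$ for every partition, and Theorem~\ref{the duality in an hke collection} delivers the conclusion that $\Gamma$ is an hke collection.

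I expect the only delicate point — rather than a genuine obstacle — to be the licence to invoke Theorem~\ref{may 2015 2} for both orderings of the same partition; once one sees that the theorem quantifies over every labeling of the two parts, the rest is a one-line sign cancellation. The odd case is worth recalling as a sanity check: there the two signs would be opposite, the relation becomes $0=0$, and no constraint on $m$ results, which is exactly why the evenness of $|\Gamma|$ is indispensable.
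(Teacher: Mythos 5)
Your proof is correct and follows essentially the same route as the paper's: invoke Theorem~\ref{may 2015 2} to get $m$, apply the formula to both orderings of a partition, cancel signs to force $m=0$, and conclude via Theorem~\ref{the duality in an hke collection}. The only cosmetic difference is that the paper picks a specific partition into two halves of equal size (so the signs agree trivially), whereas you use an arbitrary partition and observe that evenness of $|\Gamma|$ makes $|\Gamma_1|$ and $|\Gamma_2|$ have the same parity — the same cancellation, slightly more generally stated.
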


\begin{proof}
Let $m$ be the integer that exists by Theorem \ref{may 2015 2}. Since $|\Gamma|$ is even, we can find a partition $\{\Gamma_1,\Gamma_2\}$ of $\Gamma$ such that $|\Gamma_1|=|\Gamma_2|$. By Theorem \ref{may 2015 2}, the following equality holds:
$$|\bigcap \Gamma_1-\bigcup \Gamma_2|-|\bigcap \Gamma_2-\bigcup \Gamma_1|=(-1)^{|\Gamma_1|+1}m.$$
But if we replace between $\Gamma_1$ and $\Gamma_2$ then we get the following equality:
$$|\bigcap \Gamma_2-\bigcup \Gamma_1|-|\bigcap \Gamma_1-\bigcup \Gamma_2|=(-1)^{|\Gamma_2|+1}m=(-1)^{|\Gamma_1|+1}m.$$

By the last two equalities, we get $x-y=y-x$, or equivalently, $x-y=0$, where
$x=|\bigcap \Gamma_1-\bigcup \Gamma_2|$ and $y=|\bigcap \Gamma_2-\bigcup \Gamma_1|$.
  
Hence, $m=0$. So $\Gamma$ is an hke collection.
\end{proof}

In Theorem \ref{may 2015 2}, $m$ may be negative. But if we assume, in addition, that $\Gamma$ is included in $\Omega(G)$ for some graph $G$ then by the following corollary, $m$ cannot be negative.
\begin{corollary} 
Let $G$ be a graph Let $\Gamma$ be a subset of $\Omega(G)$ such that $\Gamma-\{S\}$ is an hke collection, for each $S \in \Gamma$. Then there exists an integer $0 \leq m$ such that the equality
$$|\bigcap \Gamma_1-\bigcup \Gamma_2|-|\bigcap \Gamma_2-\bigcup \Gamma_1|=(-1)^{|\Gamma_1|+1}m$$
holds for every partition $\{\Gamma_1,\Gamma_2\}$ of $\Gamma$ into two non-empty subcollections.  
\end{corollary}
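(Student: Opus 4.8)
The plan is to lean entirely on the two earlier theorems: Theorem \ref{may 2015 2} already manufactures an integer $m$ satisfying the displayed equality for every partition, so the only genuinely new assertion is that the additional hypothesis $\Gamma \subseteq \Omega(G)$ forces $m \geq 0$. First I would observe that the hypotheses of Theorem \ref{may 2015 2} are met: since $\Gamma \subseteq \Omega(G)$, every $S \in \Gamma$ is an independent set of size $\alpha(G)$, so $\Gamma$ is relevant and, by assumption, $\Gamma - \{S\}$ is an hke collection for each $S$. Thus the equality and the integer $m$ are inherited verbatim, and I only need to pin down the sign of $m$.

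To do this I would specialize the equality to the partition with $|\Gamma_1| = 1$, where the factor $(-1)^{|\Gamma_1|+1}$ equals $+1$. Assuming $|\Gamma| \geq 2$ (the case $|\Gamma| \le 1$ is vacuous, and one may take $m = 0$), fix $S \in \Gamma$ and put $\Gamma_1 = \{S\}$, $\Gamma_2 = \Gamma - \{S\}$. Then the equality reads
$$m = |S - \bigcup(\Gamma - \{S\})| - |\bigcap(\Gamma - \{S\}) - S|,$$
which, by the opening identity in the proof of Theorem \ref{broken}, is exactly $e(\Gamma) - e(\Gamma - \{S\})$. Because $\Gamma - \{S\}$ is hke it is in particular a KE collection, so $e(\Gamma - \{S\}) = 2\alpha$, giving the clean formula $m = e(\Gamma) - 2\alpha$.

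The final step is to apply the monotonicity theorem. Since $\Gamma - \{S\} \subseteq \Gamma$, the remark following the definition of $\vartriangleleft$ gives $\Gamma - \{S\} \vartriangleleft \Gamma$; moreover $\Gamma \subseteq \Omega(G)$ and $\Gamma - \{S\}$ consists of independent sets, so Theorem \ref{monotonicity} yields $e(\Gamma - \{S\}) \leq e(\Gamma)$, i.e. $2\alpha \leq e(\Gamma)$. Hence $m = e(\Gamma) - 2\alpha \geq 0$, as required. I do not expect a serious obstacle here; the one point demanding care is the bookkeeping that identifies the $m$ of Theorem \ref{may 2015 2} with the $m$ of Theorem \ref{broken} and recognizes that the sign $(-1)^{|\Gamma_1|+1}$ is positive exactly for the singleton partition, so that nonnegativity of $m$ collapses to the single monotone comparison $e(\Gamma - \{S\}) \leq e(\Gamma)$ furnished by Theorem \ref{monotonicity}.
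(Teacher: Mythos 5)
Your proposal is correct and follows essentially the same route as the paper: invoke Theorem \ref{may 2015 2} to obtain $m$, identify $m$ with $e(\Gamma)-e(\Gamma-\{S\})$ via the singleton partition, and conclude $m\geq 0$ from Theorem \ref{monotonicity} applied to $\Gamma-\{S\}\subseteq\Gamma$. Your write-up merely makes explicit the bookkeeping (the sign for $|\Gamma_1|=1$ and the identity $m=e(\Gamma)-2\alpha$) that the paper's proof leaves implicit.
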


\begin{proof}
By Theorem \ref{may 2015 2} there exists an integer $m$ satisfying the equality. We have to prove that $m$ cannot be negative. Fix $S \in \Gamma$. By Theorem \ref{monotonicity}, $$e(\Gamma-\{S\}) \leq e(\Gamma).$$
Hence, $$0 \leq e(\Gamma)-e(\Gamma-\{S\})=m.$$
\end{proof}

\section{New Characterizations} 
Theorem \ref{less partitions are enough} presents a new characterization of an hke collection. Corollary \ref{corollary for a graph} is a version of Theorem \ref{less partitions are enough} for graphs. 

While in Theorem \ref{the duality in an hke collection}, in order to prove that $F$ is an hke collection, we have to consider every partition into non-empty subcollections, here we consider much less partitions. 

\begin{theorem}\label{less partitions are enough}
Let $F$ be a collection of sets. Then $F$ is an hke collection if and only if for every subcollection $\Gamma$ of $F$, there exists a partition $\{\Gamma_1,\Gamma_2\}$ of $\Gamma$ into two non-empty subcollections, such that the following equality holds:
$$|\bigcap \Gamma_1-\bigcup \Gamma_2|=|\bigcap \Gamma_2-\bigcup \Gamma_1|.$$
\end{theorem}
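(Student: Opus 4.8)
The plan is to prove both implications, with the forward direction essentially a citation and the backward direction carried by an induction on $|F|$ whose engine is Theorem \ref{may 2015 2}.

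For the forward direction I would argue as follows. Suppose $F$ is an hke collection. Then every subcollection $\Gamma$ of $F$ is again an hke collection, because each non-empty sub-subcollection of $\Gamma$ is a non-empty subcollection of $F$ and hence a KE collection. Fixing any subcollection $\Gamma$ with $|\Gamma|\geq 2$ and applying Theorem \ref{the duality in an hke collection} to the hke collection $\Gamma$, the equality $|\bigcap \Gamma_1-\bigcup \Gamma_2|=|\bigcap \Gamma_2-\bigcup \Gamma_1|$ holds for \emph{every} partition of $\Gamma$ into two non-empty subcollections; in particular at least one such partition exists, which is exactly what the right-hand condition asks for.

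For the backward direction I would induct on $|F|$. In the base case $|F|\leq 2$, applying the hypothesis to a two-element subcollection $\{A,B\}$ leaves only the partition into singletons, which forces $|A-B|=|B-A|$ and hence $|A|=|B|$; thus $F$ is a relevant collection, and a relevant collection of cardinality at most two is automatically an hke collection. For the inductive step, assume $|F|\geq 3$. Every proper subcollection $\Gamma$ of $F$ inherits the hypothesis of the theorem, since a partition witnessing the condition for any sub-subcollection of $\Gamma$ is simultaneously such a witness when that sub-subcollection is viewed inside $F$; by the induction hypothesis each proper subcollection is therefore an hke collection. In particular $F-\{S\}$ is an hke collection for every $S\in F$, so the hypothesis of Theorem \ref{may 2015 2} is met. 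Invoking that theorem yields an integer $m$ with
$$|\bigcap \Gamma_1-\bigcup \Gamma_2|-|\bigcap \Gamma_2-\bigcup \Gamma_1|=(-1)^{|\Gamma_1|+1}m$$
for every partition $\{\Gamma_1,\Gamma_2\}$ of $F$. Applying the theorem's hypothesis now to $\Gamma=F$ itself produces one partition for which the left-hand side vanishes, and since $(-1)^{|\Gamma_1|+1}\neq 0$ this forces $m=0$. Then the displayed difference is $0$ for \emph{every} partition of $F$, so Theorem \ref{the duality in an hke collection} gives that $F$ is an hke collection, closing the induction.

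The main obstacle is not any computation but the structural bookkeeping that lets a single witnessing partition control all partitions at once; all the leverage comes from Theorem \ref{may 2015 2}, whose conclusion compresses the differences over all partitions into the one parameter $m$, so that knowing the difference is zero for a single partition pins down $m$ and hence every partition simultaneously. The only delicate points I expect are verifying that proper subcollections genuinely inherit the hypothesis (so the induction truly applies) and handling the small cases $|F|\leq 2$ separately, where relevance must first be extracted from the two-element condition before the hke property becomes automatic.
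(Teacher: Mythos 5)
Your proposal is correct and follows essentially the same route as the paper: the paper casts the backward direction as a contrapositive built around a minimal non-KE subcollection of $F$ rather than an explicit induction on $|F|$, but the engine is identical --- reduce to the situation where every $\Gamma-\{S\}$ is hke, invoke Theorem \ref{may 2015 2} to compress all partition differences into the single parameter $m$, use the hypothesized witnessing partition to force $m=0$, and conclude via Theorem \ref{the duality in an hke collection}. Your explicit base case $|F|\leq 2$ is simply absorbed into the paper's minimal-counterexample framing, so the two arguments differ only in bookkeeping, not in substance.
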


\begin{proof}
If $F$ is an hke collection then the conclusion holds by Theorem \ref{the duality in an hke collection}. 

Conversely, assume that $F$ is not an hke collection. So there is a non-KE subcollection $\Gamma$ of $F$ of minimal cardinality. So $\Gamma-\{S\}$ is an hke collection, for every $S \in \Gamma$. By Theorem \ref{may 2015 2}, there is an integer $m$, such that $$|\bigcap \Gamma_1-\bigcup \Gamma_2|-|\bigcap \Gamma_2-\bigcup \Gamma_1|=(-1)^{|\Gamma_1|+1}m$$
holds for every partition $\{\Gamma_1,\Gamma_2\}$ of $\Gamma$ into two non-empty subcollections. 

$m \neq 0$, because otherwise by Theorem \ref{the duality in an hke collection}, $\Gamma$ is an hke collection, contradicting the choice of $\Gamma$. So there is \emph{no} partition $\{\Gamma_1,\Gamma_2\}$ of $\Gamma$ into two non-empty subcollections, such that the following equality holds:
$$|\bigcap \Gamma_1-\bigcup \Gamma_2|=|\bigcap \Gamma_2-\bigcup \Gamma_1|.$$ 
\end{proof}

By the following example, it is possible that the graph induced by three specific maximum independent sets will be a KE graph, while the graph induced by another three maximum independent sets will not be KE.
\begin{example}
Let
$$V(G)=\{1,2,3,4,5,6,7\} \text{ and}$$ $$E(G)=\{(14),(15),(16),(17),(24),(36),(46),(47),(57),(67)\}.$$ $\alpha(G)=3$ and $\mu(G)=3$. So $G$ is not a KE graph. Define $S_1=:\{1,2,3\}$, $S_2=:\{3,4,5\}$, $S_3=:\{2,5,6\}$ and $S_4=:\{2,3,7\}$. $S_1,S_2,S_3$ and $S_4$ are maximum independent sets. $G_1=:G[\{1,2,3,4,5,6\}]=G[S_1 \cup S_2 \cup S_3]$ is KE, but $G_2=:G[\{1,2,3,4,5,7\}]=G[S_1 \cup S_2 \cup S_4]$ is not KE: both $G_1$ and $G_2$ have six elements and $\alpha=3$. But while $\mu_{G_1}=3$, $\mu_{G_2}=2$ (the vertex $3$ is isolated in $G_2$).  
\end{example}

 \begin{corollary}\label{corollary for a graph}
$G$ is a KE graph if and only if for some $F \subseteq \Omega(G)$, there is a matching from $V(G)-\bigcup F$ into $\bigcap F$ and for every subcollection $\Gamma$ of $F$ there is a partition $\{\Gamma_1,\Gamma_2\}$ of $\Gamma$ into two non-empty sub-collctions   such that the following equality holds: $$|\bigcap \Gamma_1-\bigcup \Gamma_2|=|\bigcap \Gamma_2-\bigcup \Gamma_1|.$$
\end{corollary}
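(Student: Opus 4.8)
The plan is to combine the graph-theoretic criterion of Theorem \ref{dam 2.6} with the purely set-theoretic criterion of Theorem \ref{less partitions are enough}. The key observation is that the partition clause appearing in the statement of this corollary is \emph{verbatim} the condition that Theorem \ref{less partitions are enough} uses to characterize hke collections. So the entire corollary will follow by substituting that characterization into Theorem \ref{dam 2.6}, which already equates ``$G$ is KE'' with the existence of an hke collection of maximum independent sets carrying a matching $M:V(G)-\bigcup F \to \bigcap F$.

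For the forward direction I would assume that $G$ is a KE graph. By Theorem \ref{dam 2.6} there is an hke collection $F$ of maximum independent sets together with a matching $M:V(G)-\bigcup F \to \bigcap F$. Since these sets are maximum independent sets, $F \subseteq \Omega(G)$, which gives the matching clause of the corollary directly. Because $F$ is an hke collection, the ``only if'' part of Theorem \ref{less partitions are enough} supplies, for every subcollection $\Gamma$ of $F$, a partition $\{\Gamma_1,\Gamma_2\}$ into two non-empty subcollections with $|\bigcap \Gamma_1-\bigcup \Gamma_2|=|\bigcap \Gamma_2-\bigcup \Gamma_1|$. This establishes the partition clause, completing this direction.

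For the converse I would assume that some $F \subseteq \Omega(G)$ satisfies both clauses. The partition clause is exactly the hypothesis of the ``if'' part of Theorem \ref{less partitions are enough}, so $F$ is an hke collection. Thus $F$ is an hke collection of maximum independent sets admitting a matching from $V(G)-\bigcup F$ into $\bigcap F$, and Theorem \ref{dam 2.6} then yields that $G$ is a KE graph.

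I do not expect a genuine obstacle here; the result is a direct synthesis of the two cited theorems, and the proof is essentially a bookkeeping argument matching clauses. The only point worth checking is that the two phrasings align: that ``hke collection of maximum independent sets'' in Theorem \ref{dam 2.6} is the same as ``$F\subseteq \Omega(G)$ that is hke'', and that the singleton subcollections (which admit no partition into two non-empty parts) cause no trouble, since they are automatically KE and the cited Theorem \ref{less partitions are enough} already accounts for them.
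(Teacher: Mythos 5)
Your proposal is correct and follows essentially the same route as the paper's own proof: both directions are obtained by substituting the characterization of hke collections from Theorem \ref{less partitions are enough} into Theorem \ref{dam 2.6}. Your extra remark about singleton subcollections is a fair observation about the statement's phrasing, but it does not change the argument, which matches the paper's.
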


\begin{proof}
Assume that $G$ is a KE graph. By Theorem \ref{dam 2.6}, there is an hke subcollection $F$ of $\Omega(G)$ and there is a matching as needed. So by Theorem \ref{less partitions are enough}, for some partitions $\{\Gamma_1,\Gamma_2\}$ of $F$ into two non-empty subcollections, the following equality holds:
$$|\bigcap \Gamma_1-\bigcup \Gamma_2|=|\bigcap \Gamma_2-\bigcup \Gamma_1|.$$

Conversely, assume that some $F \subseteq \Omega(G)$ satisfies the condition in the corollary. By Theorem \ref{less partitions are enough}, $F$ is an hke collection. So by Theorem \ref{dam 2.6}, $G$ is a KE graph. 
 \end{proof}


\begin{problem}
Characterize the collections $\Gamma$ of sets such that $\Gamma=\Omega(G)$, for some graph $G$.
\end{problem}

Connections between the current paper, \cite{critical}, \cite{criticallema} and \cite{criticalshort} should be studied. 

\bibliographystyle{amsplain}
\bibliography{..//..//lit}

\end{document}